\documentclass[11pt,a4paper]{amsart}
\usepackage{graphicx,multirow,array,amsmath,amssymb,xcolor,caption}

\newtheorem{theorem}{Theorem}

\newtheorem{lemma}[theorem]{Lemma}

\newtheorem*{conjecture}{Conjecture}

\newtheorem*{thm1}{Theorem~\ref{thm:main}}
\newtheorem*{thm2}{Theorem~\ref{thm:super}}
\newtheorem*{thm3}{Theorem~\ref{thm:main2}}

\begin{document}

\title{Petal number of torus knots using superbridge indices}

\author[H. Kim]{Hyoungjun Kim}
\address{College of General Education, Kookmin University, Seoul 02707, Korea}
\email{kimhjun@kookmin.ac.kr}

\author[S. No]{Sungjong No}
\address{Department of Mathematics, Kyonggi University, Suwon 16227, Korea}
\email{sungjongno@kgu.ac.kr}

\author[H. Yoo]{Hyungkee Yoo}
\address{Institute of Mathematical Sciences, Ewha Womans University, Seoul 03760, Korea}
\email{hyungkee@ewha.ac.kr}

\keywords{petal projection, torus knot, grid diagram, superbridge index}
\subjclass[2020]{57K10, 57R65}
\thanks{The first author(Hyoungjun Kim) was supported by the National Research Foundation of Korea (NRF) grant funded by the Korea government Ministry of Science and ICT(NRF-2021R1C1C1012299).
The second author(Sungjong No) was supported by the National Research Foundation of Korea(NRF) grant funded by the Korea government Ministry of Science and ICT(NRF-2020R1G1A1A01101724).
The third author(Hyungkee Yoo) was supported by the National Research Foundation of Korea(NRF) grant funded by the Korea government Ministry of Education(NRF-2019R1A6A1A11051177) and Ministry of Science and ICT(NRF-2022R1A2C1003203).}

\begin{abstract}
A petal projection of a knot $K$ is a projection of a knot which consists of a single multi-crossing and non-nested loops.
Since a petal projection gives a sequence of natural numbers for a given knot,
the petal projection is a useful model to study knot theory.
It is known that every knot has a petal projection.
A petal number $p(K)$ is the minimum number of loops required to represent the knot $K$ as a petal projection.
In this paper, we find the relation between a superbridge index and a petal number of an arbitrary knot.
By using this relation, we find the petal number of $T_{r,s}$ as follows;
$$p(T_{r,s})=2s-1$$
when $1 < r < s$ and $r \equiv 1 \mod s-r$.
Furthermore, we also find the upper bound of the petal number of $T_{r,s}$ as follows;
$$p(T_{r,s})\leq2s- 2\Big\lfloor \frac{s}{r} \Big\rfloor +1$$
when $s \equiv \pm 1 \mod r$.
\end{abstract}

\maketitle

\section{Introduction} \label{sec:intro}
In classical knot theory, knots have been studied with {\it regular projection\/} such that every crossing consists of a single understrand and a single overstrand.
The {\it crossing number\/} of a knot $K$, denoted by $c(K)$, is the least number of such crossings in any projection of $K$.
Adams~\cite{A} introduced $n$-crossings, also known as multi-crossings.
An {\it $n$-crossing\/} is a crossing which consists of $n$ strands such that each strand bisects the crossing.
An {\it $n$-crossing projection\/} is a projection such that all crossings are $n$-crossings.
Figure~\ref{fig:ncro} shows an example of a 5-crossing.
Adams~\cite{A} showed that for every $n \ge 2$, every knot and link has a $n$-crossing projection.
The {\it $n$-crossing number\/} of a knot $K$, denoted by $c_n(K)$, is the least number of $n$-crossings among all $n$-crossing projections of $K$.
Adams et al~\cite{ACDL} showed that for every knot or link $K$, there exists a positive integer $n$ such that $c_n(K)=1$.

\begin{figure}[h!]
\centering
\includegraphics[scale=0.6]{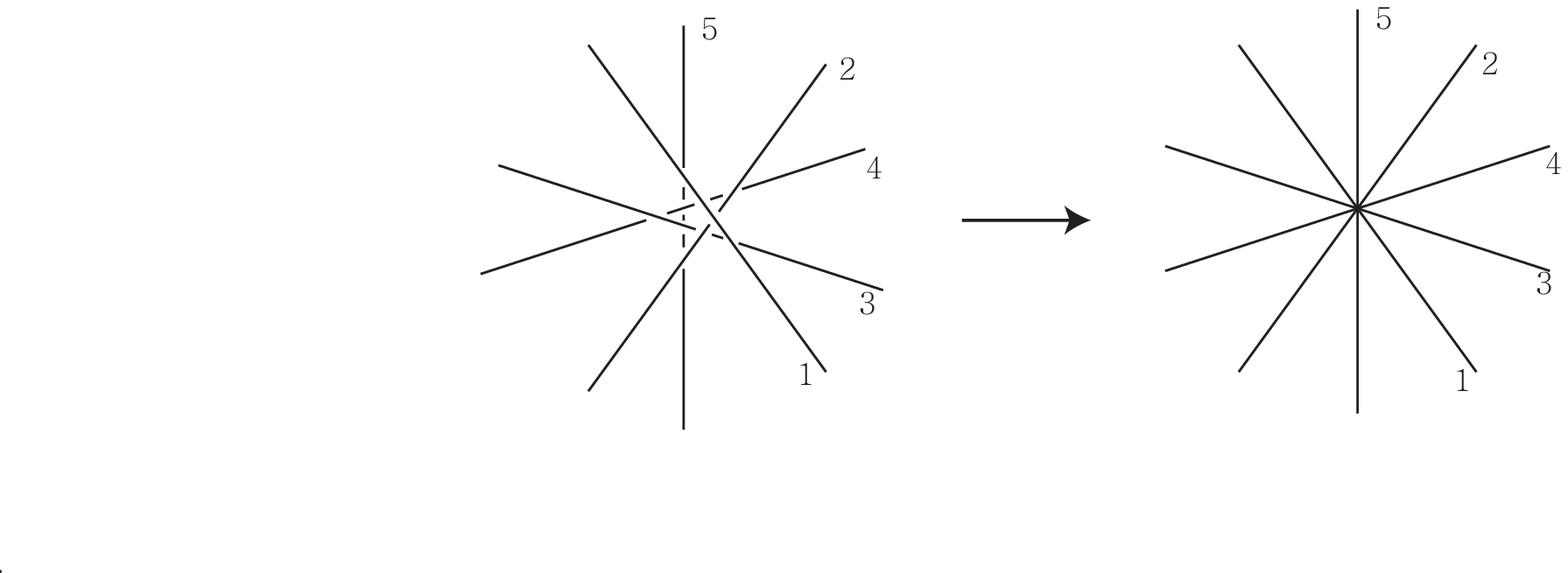}
\caption{An example of a 5-crossing}
\label{fig:ncro}
\end{figure}

An {\it {\"u}bercrossing projection\/} of a knot $K$ is a projection of $K$ with a single $n$-crossing for some $n$.
The {\it {\"u}bercrossing number\/} of a knot $K$, denoted by {\"u}$(K)$, is the smallest $n$ for which there exists a projection with a single $n$-crossing.
An {\"u}bercrossing projection may have {\it nesting loops\/} which contains at least one other loop.
Especially, if an {\"u}bercrossing projection of a knot $K$ has no nesting loop, then it is said to a {\it petal projection\/}.
Figure~\ref{fig:petal} shows an {\"u}bercrossing projection and a petal projection of a trefoil knot.
The {\it petal number\/} of a knot $K$, denoted by $p(K)$, is the smallest number of loops among all petal projections of $K$.
Adams et al~\cite{ACDL} showed that $\text{\"u}(K)$ is strictly less than $p(K)$.
A {\it petal permutation\/} is the permutation of $n$ integers obtained by listing the labels corresponding to the heights of the $n$ strands such that the topmost strand is labelled 1.
Since the petal projection only consists of loops, it cannot represent many links with more than one component.
Therefore, we consider the petal projection and the petal number only for knots.
It is easy to check that the petal number of a nontrivial knot is odd.
Since a petal projection gives a sequence of natural numbers for a given knot,
the petal projection is a useful model to study knot theory.
Even-Zohar et al~\cite{EZHLN} showed that for every nontrivial knot $K$, $p(k) \leq 2c(K) -1$.
Colton et al~\cite{CGHS} proved that
if two petal permutations $\sigma$ and $\sigma'$ represent the same knot type,
then $\sigma$ can be transformed into $\sigma'$
by a sequence of trivial petal additions, trivial petal deletions, and crossing exchanges.

\begin{figure}[h!]
\centering
\includegraphics[scale=1]{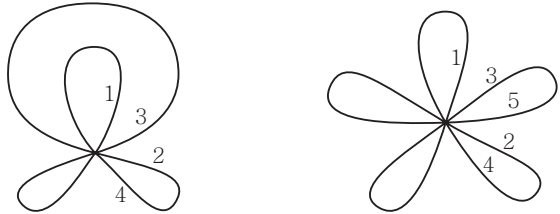}
\caption{The {\"u}bercrossing projection and the petal projection}
\label{fig:petal}
\end{figure}

Let $V$ be a standard solid torus in $S^3$
and $T$ be the boundary of $V$.
The meridian curve on $T$ is a curve that bounds a disc in $V$, 
but does not bound a disc on $T$.
A longitude curve on $T$ is a
curve that intersects a meridian exactly one.
For any relatively prime integers $r$ and $s$,
a {\it torus knot\/} $T_{r,s}$ is a knot on $T$
that wrap around $s$ times in the meridian direction and $r$ times in the longitude direction.

In this paper, we deal with petal numbers of torus knots.
Since $T_{-r,s}$ is a mirror image of $T_{r,s}$,
$p(T_{-r,s})=p(T_{r,s})$.
Thus, without loss of generality, we assume that $r$ and $s$ are positive.
We find the petal number of a particular torus knot as in the following theorem.

\begin{theorem}
\label{thm:main}
Let $r$ and $s$ be relatively prime integers with $1 < r < s$.
If $r \equiv 1 \mod (s-r)$, then
$$p(T_{r,s}) = 2s-1.$$
\end{theorem}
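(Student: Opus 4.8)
The plan is to pin $p(T_{r,s})$ from below and from above, both times by $2s-1$: the lower bound comes from the superbridge index together with the petal--superbridge relation of Theorem~\ref{thm:super}, and the upper bound from a petal projection tailored to the congruence hypothesis. Set $d=s-r$. Since $r>1$ and $r\equiv 1\pmod d$, necessarily $r\ge d+1$, so $d<r$ and hence $r<s=r+d<2r$. The superbridge index of torus knots was determined by Kuiper: for coprime $2\le r<s$ one has $sb(T_{r,s})=\min\{2r,s\}$, and the inequality $s<2r$ forces $sb(T_{r,s})=s$. Combining this with the inequality $p(K)\ge 2\,sb(K)-1$, valid for every knot $K$ (Theorem~\ref{thm:super}), already gives $p(T_{r,s})\ge 2s-1$.

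For the reverse inequality I would exhibit an explicit petal projection of $T_{r,s}$ with exactly $2s-1$ loops. The most natural sources are a superbridge-minimal presentation of $T_{r,s}$ — the one that realizes Kuiper's bound $sb(T_{r,s})=s$ — or the non-minimal $s$-strand braid presentation $T_{r,s}=\overline{(\sigma_1\cdots\sigma_{s-1})^r}$, read through an associated grid diagram; in the braid picture one views the word as $r$ successive full twists of $s$ strands and uses $r=md+1$ to peel off a single residual strand. The hypothesis $r\equiv 1\pmod{s-r}$ does double duty here: it is what makes $sb(T_{r,s})$ equal $s$ rather than $2r$, and, via the decomposition $r=md+1$, it is precisely the combinatorial condition under which the chosen presentation flattens to a petal diagram with no nested loop and with exactly $2s-1$ petals. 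One then has to certify that the petal permutation produced does represent $T_{r,s}$, for instance by computing a classical invariant of the resulting knot or by reducing the permutation through the trivial petal additions/deletions and crossing exchanges of Colton et al.~\cite{CGHS}.

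The step I expect to be the main obstacle is exactly this upper-bound construction: one must simultaneously force the petal count down to precisely $2s-1$ — rather than the larger number a careless flattening or grid-to-petal conversion produces — and rigorously identify the resulting knot as $T_{r,s}$, not merely as a knot that happens to share its crossing number or bridge number. Once both inequalities $p(T_{r,s})\ge 2s-1$ and $p(T_{r,s})\le 2s-1$ are in hand, the asserted equality $p(T_{r,s})=2s-1$ is immediate.
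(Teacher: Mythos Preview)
Your lower-bound argument is correct and matches the paper's exactly: the congruence $r\equiv 1\pmod{s-r}$ forces $s<2r$, Kuiper's formula gives $sb(T_{r,s})=s$, and Theorem~\ref{thm:super} then yields $p(T_{r,s})\ge 2s-1$.

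The gap is the upper bound, and you acknowledge as much. Your proposal does not actually construct a petal projection with $2s-1$ loops; it only gestures at possible sources (a superbridge-minimal embedding, or the $s$-strand braid closure) and at the decomposition $r=md+1$, without producing a diagram or a petal permutation and without verifying the knot type. Since this construction is the entire content of the theorem beyond the lower bound, what you have is a correct strategy outline, not a proof. For comparison, the paper executes this step concretely, and by a route different from those you list: it starts from the $r$-strand braid word $(\sigma_1\cdots\sigma_{r-1})^s$, uses $r<s<2r$ to rewrite it as a full twist on $s-r$ strands next to $2r-s$ parallel strands, and reads off from this an explicit $(2s-1)\times(2s-1)$ grid diagram of $T_{r,s}$. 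The hypothesis $r\equiv 1\pmod{s-r}$ is then used---via an explicit reordering of horizontal and vertical segments, written out as indexed arrays---to rearrange this grid diagram into a \emph{petal} grid diagram (every vertical segment of length $s-1$ or $s$) without changing the knot type or the grid size. That petal grid diagram corresponds directly to a petal projection with $2s-1$ loops, giving $p(T_{r,s})\le 2s-1$.
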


Note that Lee and Jin~\cite{LJ} independently showed that $p(T_{r,r+2}) = 2r+3$ for odd integer $r$ greater than or equal to 3.
To prove Theorem~\ref{thm:main}, we use the relation between a superbridge index and a petal number of a knot.

\begin{theorem}
\label{thm:super}
For any nontrivial knot $K$,
$$
2sb(K) \leq p(K)+1.
$$
\end{theorem}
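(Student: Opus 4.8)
The plan is to deduce the inequality from an explicit embedding of $K$ extracted from a petal projection. Let $n=p(K)$; recall that $n$ is odd since $K$ is nontrivial, so $\tfrac{n+1}{2}$ is an integer. Fix a petal projection of $K$ with $n$ loops and petal permutation $\sigma=(\sigma_1,\dots,\sigma_n)$. I would build a tame embedding $\gamma\colon S^1\to\mathbb{R}^3$ of $K$ (without changing the knot type) whose vertical projection is the given petal diagram, whose $n$ strands through the multi-crossing sit at the heights $\sigma_1,\dots,\sigma_n\in\{1,\dots,n\}$, and whose $n$ petal loops are routed in a tightly controlled fashion; this is the step where I would pass through the grid diagram naturally associated to $\sigma$, realizing $K$ as a piecewise-linear curve whose combinatorics are dictated by the permutation. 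Granting such a $\gamma$, for $v\in S^2$ let $b_v(\gamma)$ denote the number of local maxima of $t\mapsto\langle\gamma(t),v\rangle$; then $sb(K)\le sb(\gamma)=\max_v b_v(\gamma)$, so it suffices to show $b_v(\gamma)\le\tfrac{n+1}{2}$ for every $v$, which is exactly $2\,sb(K)\le p(K)+1$.

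To bound $b_v(\gamma)$ I would split $S^2$ into directions near the vertical axis $e_3$ and directions with a definite horizontal component. In the near-vertical case, $\langle\gamma,v\rangle$ is a small perturbation of the height function $z$, and along $K$ the $z$-coordinate is constant on each central strand and interpolates monotonically along each loop, so it visits $1,\dots,n$ in a fixed cyclic order and has exactly the number of cyclic peaks of that arrangement; since peaks and valleys alternate around the circle, this is at most $\lfloor n/2\rfloor=\tfrac{n-1}{2}$. In the horizontal-ish case, I would use that away from the center the curve lies in a thin slab while the petal loops occupy pairwise-disjoint narrow angular wedges whose endpoints lie essentially on one convex circle; a line in direction $v$ then meets the silhouette of these loops in a controlled number of arcs, and together with the bookkeeping at the $O(n)$ corners where loops meet central strands this gives $b_v(\gamma)\le\tfrac{n+1}{2}$. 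Intermediate directions follow by interpolating the two estimates, provided the slab and the wedges are taken thin enough.

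The \emph{main obstacle} is the choice of the 3D routing for the petal loops, so that no single direction sees too many turning points at once. A naive flat realization, a planar flower with only the heights at the central multi-crossing perturbed, already fails: a generic horizontal direction then meets roughly one extremum per petal, giving about $n$ maxima instead of $\tfrac{n+1}{2}$. Hence the loops must be nested in height (equivalently, the piecewise-linear curve coming from the grid diagram of $\sigma$ must be arranged so that its essential corners never all become extrema for one direction). Morally this amounts to showing that a knot with an $n$-petal projection has a representative with the extremal behaviour of an $(n+1)$-stick polygon, after which the elementary fact that an $m$-stick knot has at most $\lfloor m/2\rfloor$ maxima in every direction closes the argument; the near-vertical perturbation estimate and the uniform bound over intermediate directions are then routine.
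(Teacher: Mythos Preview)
Your outline has the right shape (build one explicit embedding from a minimal petal diagram and bound $b_v$ uniformly over $v\in S^2$), and you correctly isolate the obstacle: the routing of the petals in $\mathbb{R}^3$. But you do not actually resolve that obstacle. The embedding $\gamma$ is never specified; ``pass through the grid diagram associated to $\sigma$'' and ``nested in height'' are not constructions, and your horizontal case is a wish, not an argument (``meets the silhouette in a controlled number of arcs'' does not bound the number of local maxima, and you never say what ``controlled'' means). Worse, the one concrete geometric hypothesis you do make---that away from the center the curve lies in a thin slab---contradicts what is needed. If the petals sit near a single height while the central strands sit at heights $1,\dots,n$, you must insert $2n$ nearly vertical connectors, and those connectors produce new corners whose extremal behaviour you never analyze; this is exactly the failure mode you already flagged for the ``naive flat'' model.

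What is missing is the specific polygon the paper uses. Realize each central strand as a diameter of a regular $2n$-gon placed at the height dictated by $\sigma$, and realize each petal as the \emph{single} straight segment joining two consecutive vertices of that $2n$-gon (necessarily at different heights). The result is a $2n$-edge polygonal $K$ sitting on a $2n$-gonal prism: $n$ ``horizontal'' diagonals through the $z$-axis and $n$ ``vertical'' sticks lying on alternate lateral faces. For $\vec v=\pm e_3$ the horizontal diagonals carry all extrema and there are at most $(n-1)/2$ maxima. For any other $\vec v$, project to the plane $H=\mathrm{span}(\vec v,e_3)$; the $z$-axis cuts $H$ into half-planes $H_1\ni\vec v$ and $H_2$, every horizontal diagonal crosses the axis (hence contributes no maximum by itself), and a vertical stick can carry a local maximum only if its projection lies in $H_1$. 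Because the $n$ vertical sticks occupy $n$ \emph{alternating} faces of the $2n$-prism, at most $(n+1)/2$ of them land in $H_1$ (with a one-case check when a stick projects onto the axis). That is the whole proof; your sketch never reaches this counting because it never commits to the prism realization.
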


Adams et al~\cite{ACDL} showed that
$$p(T_{r,s})\leq
\begin{cases}
2s-1  & \text{for} \quad s \equiv 1 \mod r,\\
2s+3  & \text{for} \quad s \equiv -1 \mod r.
\end{cases}$$
We develop this result to the following theorem by using integral surgeries on curves around $T_{r,r+1}$.

\begin{theorem}
\label{thm:main2}
Let $r$ and $s$ be positive integers with $s \equiv \pm 1 \mod r$.
Then
$$p(T_{r,s})\leq
2s- 2\Big\lfloor \frac{s}{r} \Big\rfloor +1.$$
\end{theorem}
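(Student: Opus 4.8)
The plan is to realize $T_{r,s}$ by inserting full twists into a suitable ``base'' torus knot, and to show that each full twist can be absorbed into a petal projection at the cost of exactly $2(r-1)$ additional loops; the twists are precisely the integral ($\mp 1$) surgeries alluded to in the introduction.

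First I would set up the reduction. Write $s=kr+\varepsilon$ with $\varepsilon\in\{+1,-1\}$ and $k\ge 1$; when $r\le 2$ one may always take $\varepsilon=+1$, so in the case $\varepsilon=-1$ assume $r\ge 3$. Recall that $T_{r,s}$ is the closure of the $r$-strand braid $(\sigma_1\cdots\sigma_{r-1})^s$ and that a positive full twist on those $r$ strands is $\Delta^2=(\sigma_1\cdots\sigma_{r-1})^r$, so $T_{r,kr+\varepsilon}$ is obtained from the base knot $T_{r,r+\varepsilon}$ by inserting $k-1$ full twists on the $r$ parallel braid strands, equivalently by performing $k-1$ successive $\mp 1$ surgeries along unknotted circles each bounding a disk that meets the knot transversally in those $r$ points. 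For the base cases the bound of Adams et al.~\cite{ACDL} gives $p(T_{r,r+1})\le 2(r+1)-1=2r+1$ (since $r+1\equiv 1\bmod r$), and, after rewriting $T_{r,r-1}=T_{r-1,r}$, it gives $p(T_{r,r-1})\le 2r-1$ (since $r\equiv 1\bmod(r-1)$); these agree with the claimed formula at $k=1$ because $\lfloor(r+1)/r\rfloor=1$ and $\lfloor(r-1)/r\rfloor=0$.

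The technical heart is a local lemma: if a knot $J$ has a petal projection in which some $r$ of its strands leave the central multi-crossing as a coherently oriented parallel bundle with no other strand between them, then twisting that bundle by one full twist produces a knot with a petal projection having $2(r-1)$ more loops and still carrying such a bundle of $r$ strands. I would prove this either directly in terms of petal permutations or, more geometrically, by passing to a grid (rectangular) diagram of $T_{r,r\pm1}$ arranged so that the $r$ braid strands occupy $r$ consecutive columns, inserting the standard grid picture of a full twist on those columns, and pushing the enlargement back through the grid-to-petal procedure while checking that it yields exactly $2(r-1)$ new loops and creates no nesting. Iterating the lemma $k-1$ times gives
$$p(T_{r,kr+1})\le(2r+1)+2(r-1)(k-1),\qquad p(T_{r,kr-1})\le(2r-1)+2(r-1)(k-1),$$
and a direct computation, using $\lfloor(kr+1)/r\rfloor=k$ and $\lfloor(kr-1)/r\rfloor=k-1$, shows that both right-hand sides equal $2s-2\lfloor s/r\rfloor+1$, which is the assertion.

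The main obstacle is this local lemma, and within it the precise accounting: one must show that a full twist on $r$ parallel strands costs exactly $2(r-1)$ loops in the petal model --- not more --- and that after the insertion the diagram still has a coherent, adjacent bundle of $r$ strands and remains nesting-free, so that the twists can be stacked $k-1$ times. Producing a sufficiently clean petal (or grid) model of the base knot $T_{r,r\pm1}$, one in which the $r$ braid strands are genuinely parallel and consecutive, is what makes this bookkeeping possible and is where most of the work will lie.
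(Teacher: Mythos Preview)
Your proposal is correct and follows essentially the same strategy as the paper: reduce to the base knots $T_{r,r+1}$ and $T_{r-1,r}$ (with petal bounds $2r+1$ and $2r-1$ from Adams et al.), then insert full twists via surgery and account for a cost of $2(r-1)$ new loops per twist, yielding exactly the formulas $p(T_{r,kr+1})\le 2(r-1)k+3$ and $p(T_{r,kr-1})\le 2(r-1)k+1$. The only organizational difference is that the paper performs a single $\frac{1}{n-1}$-surgery (all twists at once) and then exhibits the resulting petal grid diagram explicitly, whereas you phrase it as an iterated local lemma adding one twist at a time; the paper's explicit grid-diagram pictures are precisely the verification of the bookkeeping you flag as ``where most of the work will lie.''
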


The rest of this paper is organized as follows.
In Section~\ref{sec:grid}, we introduce a petal grid diagram and superbridge index.
Also we prove Theorem~\ref{thm:super}.
In Section~\ref{sec:thm1} and \ref{sec:thm3}, we prove Theorem~\ref{thm:main} and \ref{thm:main2}, respectively.
In Section~\ref{sec:conc},
we mention the relation between arc index and petal number for some torus knot.

\section{Petal grid diagrams and superbridge indices}\label{sec:grid}

In this section, we introduce some useful invariants of a knot, and give a relation between them.

\subsection{Petal grid diagrams} \ 

A {\it grid diagram\/} is a knot diagram consisting of a finite number of vertical line segments and the same number of horizontal lines such that every vertical line segment crosses over every horizontal line segment as drawn in Figure~\ref{fig:gridarc}~(a).
Cromwell~\cite{C} showed that every knot has a grid diagram. 
The {\it grid index $g(K)$\/} of a knot $K$ means the minimum number of vertical line segments (or horizontal line segments) among all grid diagrams which represent the knot type $K$.

An {\it arc presentation\/} of a knot $K$ is an embedding of $K$ into $R^3$ which is contained in a finite number of half-planes centered on a binding axis such that each half-plane intersects $K$ at exactly one arc as drawn in Figure~\ref{fig:gridarc}~(c).
Birman and Menasco~\cite{BM} introduced the concept of arc presentation.
The {\it arc index $\alpha(K)$\/} of a knot $K$ means the minimum number of arcs among all arc presentations which represent the knot type $K$.
A grid diagram can be expressed as an arc representation and vice versa as drawn in Figure~\ref{fig:gridarc}~(b). 
In detail, each horizontal line segment in Figure~\ref{fig:gridarc} (a) corresponds to the pair of connected blue line segments in Figure~\ref{fig:gridarc} (b).
Thus the grid index is equal to the arc index for same knot type, i.e. $g(K)= \alpha(K)$.

\begin{figure}[h!]
\centering
\includegraphics[scale=0.9]{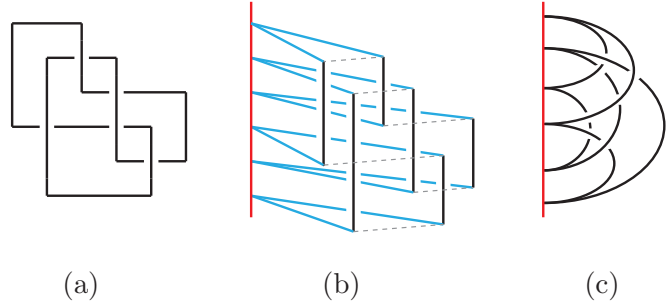}
\caption{A grid diagram and an arc presentation}
\label{fig:gridarc}
\end{figure}

To explain the relationship between the arc index and the petal number, we transform an arc presentation as following.
First consider an arc presentation using a binding circle which is obtained from a binding axis by connecting at the infinity point.
Replace each arc by a straight chord, and label every endpoint of straight chords in clockwise order as drawn in Figure~\ref{fig:petalarc}~(a).
During this process, the order of each arc is not changed.
A {\it petal arc presentation\/} is an arc presentation with $2n+1$ arcs such that every arc connects two endpoints $i$ and $i+n \mod (2n+1)$.
Since every petal projection can be transformed to a petal arc presentation as drawn in Figure~\ref{fig:petalarc}, a petal number is greater than or equal to an arc index for the same knot type, i.e. $p(K) \geq \alpha(K)$.

Similar to a petal arc presentation, a {\it petal grid diagram\/} is a grid diagram with $2n+1$ vertical line segments such that every vertical line segments has length $n$ or $n+1$.
In here, each vertical line segment connects the $i$-th and $(i+n)$-th horizontal line segments with respect to modulo $(2n+1)$.
This means that a petal grid diagram corresponds to a petal arc presentation.
Thus it can be expressed as a petal projection.

\begin{figure}[h!]
\centering
\includegraphics{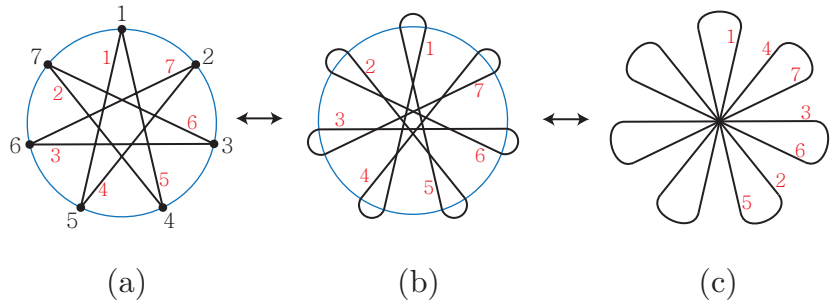}
\caption{A petal projection and corresponding arc presentation}
\label{fig:petalarc}
\end{figure}

\subsection{Superbridge indices} \ 

In~\cite{K}, Kuiper introduces a useful knot invariant, the superbridge index.
We interpret a knot $K$ as an polygonal or smooth embedding $K: S^1 \hookrightarrow \mathbb{R}^3$.
Let $v: \mathbb{R}^3 \rightarrow \mathbb{R}$ be a linear function with norm $\|v\|=1$
and let $\vec{v}$ be the corresponding unit vector.
Let $b_{\vec{v}}(K)$ be the number of local maxima of the restriction to $K$, $v \circ K$.
Then the {\it superbridge index\/} $sb(K)$ is defined by
$$
sb(K)=\min_{K' \in [K]}\max_{\vec{v} \in S^2} b_{\vec{v}}(K')
$$
where $[K]$ is an ambient isotopy class of a knot $K$.
We remark that the {\it bridge index\/} $b(K)$ of $K$ is defined by
$$
b(K)=\min_{K' \in [K]}\min_{\vec{v} \in S^2} b_{\vec{v}}(K').
$$

Adams et al~\cite{ASO} showed that
$2b(K)\leq \text{\"u}(K)$.
For any nontrivial knot $K$,
$\text{\"u}(K) < p(K)$.
So we obtain that $2b(K) \leq p(K)-1$.
Kuiper showed that $b(K)$ is strictly less than $sb(K)$.
Thus we might guess that an upper bound for $sb(K)$ is at least $\frac{p(K)+1}{2}$.

\begin{thm2}
For any nontrivial knot $K$,
$$2sb(K) \leq p(K)+1.$$
\end{thm2}

\begin{proof}
A knot $K$ has a polygonal petal projection on the $xy$-plane as drawn in Figure~\ref{fig:super}~(a).
From this projection, we construct a polygonal knot of $K$.
First, we put a $2p(K)$-prism with $p(K)$ layers in $\mathbb{R}^3$ such that both centers of the top and bottom sides of the prism lie on the $z$-axis.
Label each layer as $1,2,\dots,p(K)$ in the order from top to bottom.
There are $p(K)$ line segments which are passing through the origin in Figure~\ref{fig:super}~(a).
Place these line segments passing through the $z$-axis, say horizontal line segments, on the suitable layers of the prism.
In detail, the assigned number of a horizontal line segment must be the same as the assigned number of the layer on which it is placed.
Since we start to construct a polygonal knot from the polygonal petal projection, each of the remaining line segments is contained in one rectangular side of the prism.
So we obtain the polygonal knot of $K$ as drawn in Figure~\ref{fig:super}~(b).

\begin{figure}[h!]
\centering
\includegraphics{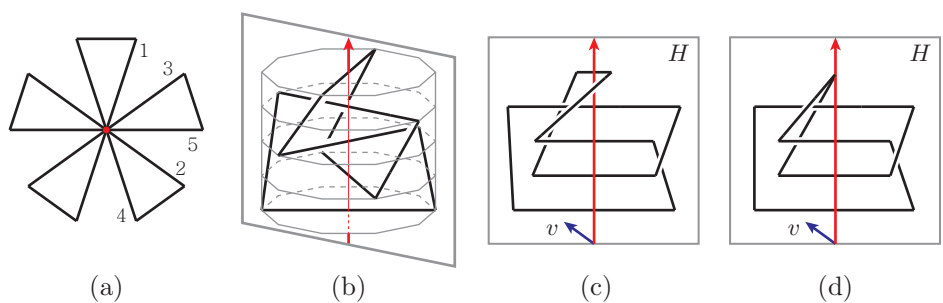}
\caption{The superbridge position of a petal diagram}
\label{fig:super}
\end{figure}

Now we assume that $\vec{v}$ is parallel to the $z$-axis.
Then local maxima and minima can only appear at horizontal line segments.
Note that there are the same number of local maxima and minima.
Since $p(K)$ is odd, there are at most $\frac{p(K)-1}{2}$ local maxima in this case.

It remains to consider the case that $\vec{v}$ is not parallel to the $z$-axis.
Let $H$ be a plane spanned by the two vectors $\vec{v}$ and $\vec{z}=(0,0,1)$.
Then $H$ is divided into two regions $H_1$ and $H_2$ by the $z$-axis.
Now we have a projection of the polygonal knot of $K$ to $H$.

Then there are two cases according to the existence of a horizontal line segment perpendicular to $H$.
The case that there is no horizontal line segment perpendicular to $H$ is described in Figure~\ref{fig:super} (c),
and the other case is described in Figure~\ref{fig:super} (d).
To avoid confusion in the notation, images of line segments which are not horizontal segments are called vertical sticks.
Then there are exactly $p(K)$ vertical sticks.
Note that each vertical stick has at most one local maximum for any direction.
Without loss of generality, we assume that the terminal point of $\vec{v}$ is contained in $H_1$.
Then every vertical stick contained in $H_2$ which does not touch the $z$-axis cannot have a local maximum in $\vec{v}$ direction.
In the case of Figure~\ref{fig:super}~(c), there are $\frac{p(K)-1}{2}$ vertical sticks which are contained completely in $H_2$.
This implies that the number of local maxima is at most $\frac{p(K)+1}{2}$.
In the case of Figure~\ref{fig:super}~(d), there are two cases.
The number of vertical sticks contained in $H_2$ which do not touch the $z$-axis is either $\frac{p(K)-1}{2}$ or $\frac{p(K)-1}{2}-1$.
In the first case, two vertical sticks which are connected by one point on the $z$-axis are contained in $H_1$.
So there are at most $\frac{p(K)+1}{2}$ local maxima.
In the second case, two vertical sticks which are connected by one point in the $z$-axis are contained in $H_2$.
Even though interiors of these vertical sticks are contained in $H_2$, there is at most one local maximum in these sticks.
Thus there are at most $\frac{p(K)+1}{2}$ local maxima in both cases.
Therefore there are at most $\frac{p(K)+1}{2}$ local maxima for any vector $\vec{v}$.
This implies that $2sb(K) \leq p(K)+1$.
\end{proof}

\section{Proof of Theorem~\ref{thm:main}}
\label{sec:thm1}

First we consider a torus knot $T_{r,s}$ with $1<r<s<2r$.
The braid word of $T_{r,s}$ is of the form $(\sigma_1 \sigma_2 ... \sigma_{r-1})^{s}=(\sigma_1 \sigma_2 ... \sigma_{r-1})^{s-r}(\sigma_1 \sigma_2 ... \sigma_{r-1})^{r}$ as drawn in Figure~\ref{fig:braid}~(a).
By the braid relations,
the first $s-r$ strings are fully twisted, and the other $2r-s$ strings are untwisted as drawn in Figure~\ref{fig:braid}~(b).
A full twist of $s-r$ strings can be represented by kinked $s-r-1$ strings wrapping around a string as drawn in Figure~\ref{fig:braid}~(c).

\begin{figure}[h!]
\centering
\includegraphics{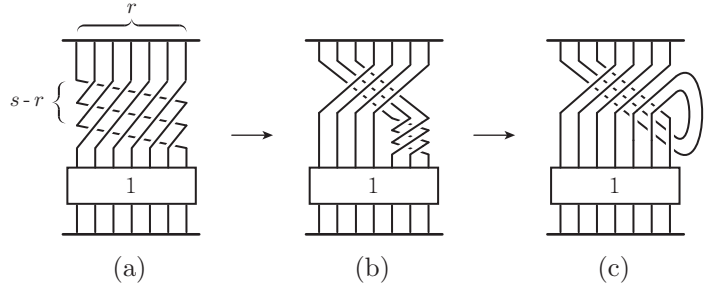}
\caption{The braid of torus knot $T_{r,s}$}
\label{fig:braid}
\end{figure}

We note Kuiper's result~\cite{K}, which says that $sb(T_{r,s})=\min\{2r, s\}$ for $1<r<s$.
Now we recall Theorem~\ref{thm:main}.

\begin{thm1}
Let $r$ and $s$ be relatively prime integers with $1 < r < s$.
If $r \equiv 1 \mod (s-r)$, then
$$p(T_{r,s}) = 2s-1.$$
\end{thm1}

\begin{figure}[h!]
\centering
\includegraphics[scale=0.6]{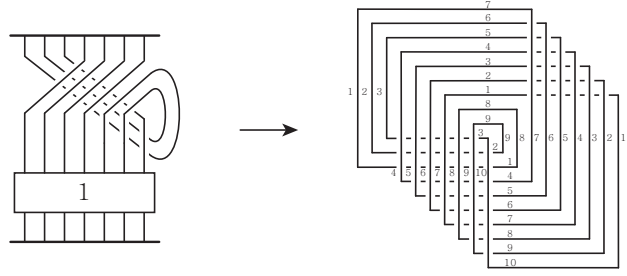}
\caption{The grid diagram for torus knot $T_{r,s}$ with $1<r<s<2r$}
\label{fig:grid}
\end{figure}

\begin{proof}
Let $r$, $s$ be relatively prime integers with $1 < r < s$ and $r \equiv 1 \mod (s-r)$.
This implies that $1<r<s<2r$.
By Theorem~\ref{thm:super} and Kuiper's result, $2s-1 \leq p(T_{r,s})$.
So it is sufficient to show that $p(T_{r,s}) \leq 2s-1$.

Take a grid diagram $D$ of $T_{r,s}$ which has $2s-1$ vertical and horizontal line segments each as drawn in Figure~\ref{fig:grid}.
Label these vertical line segments as $v_1, v_2, \dots ,v_{2s-1}$ in order from left to right.
Similarly, label these horizontal line segments as $h_1, h_2, \dots ,h_{2s-1}$ in order from top to bottom.
Let $D_V$ and $D_H$ be the sets of all vertical and horizontal line segments of $D$, respectively.

Now we define a height function $f$ on $D_V \cup D_H$.
For any vertical line segments $v_i$, we define a height function on $D_V$ as follow:
$$
f(v_i)=
\begin{cases}
i  & \text{if} \quad 1 \leq i < s,\\
2s-i  & \text{if} \quad s \leq i \leq 2s-1
\end{cases}
\quad \text{for} \quad v_i \in D_V.
$$  
Since a horizontal line segment $h_k$ connects two vertical segments $v_i$ and $v_j$ for $1 \leq i < s$ and $s \leq j \leq 2s-1$,
we define a height function on $D_H$ as follow:
$$
f(h_k)=
\begin{cases}
2s-j  & \text{if} \quad 1 \leq k < s,\\
i  & \text{if} \quad s \leq k \leq 2s-1
\end{cases}
\quad \text{for} \quad h_k \in D_H.
$$  
Then the vertical and horizontal line segments that intersect at the north-east corner have the same height except the corner connected to $v_s$ and $h_s$.
Also, the vertical and horizontal line segments that intersect at the south-west corner have the same height.

We remark that $1 < r < s$ and $r \equiv 1 \mod (s-r)$.
Assume that $m=s-r$ and $n= \lfloor \frac{s}{m} \rfloor$.
Choose $2n+1$ pairs of line segments $v_{s \pm lm}$ and $h_{s \pm lm}$ for $l=\{0,\dots,n\}$.
Connect these line segments, then we obtain a path 
$$v_{s-nm} h_{s-nm} v_{s+m} h_{s+m} v_{s-(n-1)m} h_{s-(n-1)m} \cdots v_{s+nm} h_{s+nm} v_{s} h_{s}$$
as a red spiral in Figure~\ref{fig:grid_seq}~(a).
Rearrange $nm$ horizontal line segments $h_i$ for $i < s$ as in the order of
\begin{center}
\begin{tabular}{llll}
    $m$, & $2m$, & \dots, & $nm$,\\
    $m-1$, & $2m-1$, & \dots, & $nm-1$\\
    & & \vdots &\\
    $2$, & $m+2$, & \dots, & $(n-1)m+2$,\\
    $1$, & $m+1$, & \dots, & $(n-1)m+1$.\\
\end{tabular}
\end{center}
Also rearrange $nm$ horizontal line segments $h_i$ for $i > s$ as in the order of 
\begin{center}
\begin{tabular}{lllll}
    & & $s+2m-1$, & \dots, & $s+nm-1$,\\
    & $s+m-2$, & $s+2m-2$, & \dots, & $s+nm-2$,\\
    & & \ \ \ \ \ \ \ \vdots & & \\
    & $s+2$, & $s+m+2$, & \dots, & $s+(n-1)m+2$,\\
    & $s+1$, & $s+m+1$, & \dots, & $s+(n-1)m+1$,\\
    $s+m-1$, & $s+m$, & $s+2m$, & \dots, & $s+nm$.\\
\end{tabular}
\end{center}
During this process, there can exist a horizontal line segment $h_i$ for $i>s$ which cannot move its location changed preserving the knot type when there is a horizontal line segment $h_k$ such that $k>i$ and $f(h_k) \leq f(v_i)$.
In this case, we divide $v_j$ as a two parts $v_j^u$ and $v_j^l$ such that $f(v_j^u)=f(v_j)$ and $f(v_j^l)=f(h_i)$.
Then we obtain a diagram as drawn in Figure~\ref{fig:grid_seq}~(b).

Finally, we adjust vertical line segments of this diagram to construct a petal grid diagram.
Rearrange $nm$ vertical line segments $v_i$ for $i < s$ as in the order of
\begin{center}
\begin{tabular}{lllll}
&    $m$, & $2m$, & \dots, & $(n-1)m$,\\
&    $m-1$, & $2m-1$, & \dots, & $(n-1)m-1$,\\
&    & \ \ \ \ \ \ \ \vdots & & \\
&    $2$, & $m+2$, & \dots, & $(n-2)m+2$, \\
&    $1$, & $m+1$, & \dots, & $(n-2)m+1$, \\
    $s-m$, & $s-m+1$, & $s-m+2$, & \dots, & $s-2$, \ \ \ \  $s-1$.\\
\end{tabular}
\end{center}
Also rearrange $nm$ vertical line segments $v_i$ for $i > s$ as in the order of 
\begin{center}
\begin{tabular}{lllll}
    & & $s+2m-1$, & \dots, & $s+nm-1$,\\
    & $s+m-2$, & $s+2m-2$, & \dots, & $s+nm-2$,\\
    & & \ \ \ \ \ \ \ \vdots & & \\
    & $s+2$, & $s+m+2$, & \dots, & $s+(n-1)m+2$,\\
    & $s+1$, & $s+m+1$, & \dots, & $s+(n-1)m+1$,\\
    $s+m-1$, & $s+m$, & $s+2m$, & \dots, & $s+nm$.\\
\end{tabular}
\end{center}
Thus we obtain a petal grid diagram of $T_{r,s}$ with $2s-1$ vertical line segments as drawn in Figure~\ref{fig:grid_seq}~(c).
Therefore $p(T_{r,s}) \leq 2s-1$.
\end{proof}

\begin{figure}[h!]
\centering
\includegraphics[scale=0.5]{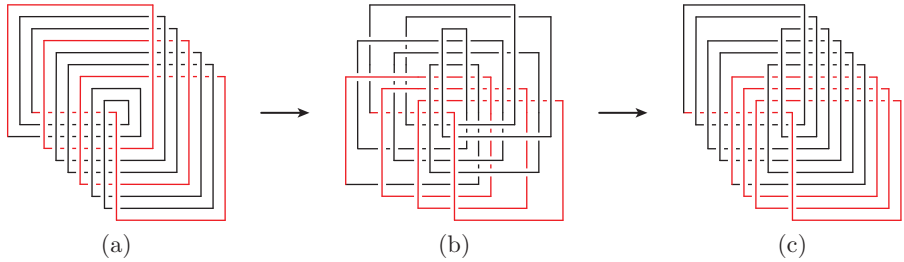}
\caption{The petal grid diagram from the grid diagram for torus knot $T_{r,s}$ with $1 < r < s$ and $r \equiv 1 \mod s-r$}
\label{fig:grid_seq}
\end{figure}

\section{Integral surgeries and proof of Theorem~\ref{thm:main2}} \label{sec:thm3}

Let $V$ be a standard solid torus in $S^3$
and let $T$ be the boundary of $V$.
We assume that $l$ is the core of $V$,
and $m$ is the curve outside $V$ obtained by a small perturbation of the meridian curve of $T$.
Consider the torus knot $T_{r,r+1}$ on $T$ for any positive integer $r$.
If $n$ is an integer with $n \geq 2$,
then $T_{r,nr+1}$ is obtained by a $\frac{1}{n-1}$-surgery on $m$.
Remark that
the surgery does not change $S^3$.
Similarly, $T_{nr-1,r}$ is obtained from $T_{r-1,r}$ after a $\frac{1}{n-1}$-surgery on $l$.
We can put a torus knot $T_{r,r+1}$ in a solid torus $V$ as drawn in Figure~\ref{fig:ml}~(a).
Then curves $l$ and $m$ are represented as drawn in Figure~\ref{fig:ml}~(b).

\begin{figure}[h!]
\centering
\includegraphics{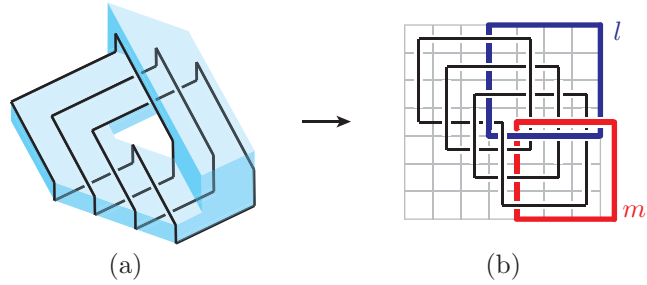}
\caption{The curves $m$ and $l$ for surgeries}
\label{fig:ml}
\end{figure}

Using these facts, we prove the following two lemmas.

\begin{lemma}
\label{lem:nq+1}
For any positive integers $r$ and $n$,
$$p(T_{r,nr+1}) \leq 2(r-1)n+3.$$
\end{lemma}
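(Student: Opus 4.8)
The plan is to build a petal grid diagram of $T_{r,nr+1}$ with $2(r-1)n+3$ vertical line segments by starting from a small petal grid diagram of $T_{r,r+1}$ and performing the $\tfrac{1}{n-1}$-surgery on the curve $m$ described above, tracking carefully how the surgery inflates the grid. First I would fix a concrete petal grid diagram $D_0$ of $T_{r,r+1}$; since $r+1 \equiv 1 \bmod 1$, Theorem~\ref{thm:main} (or rather its constructive proof in Section~\ref{sec:thm1}) already gives such a diagram with $2(r+1)-1 = 2r+1$ vertical segments, and one checks $2(r-1)\cdot 1 + 3 = 2r+1$, so the case $n=1$ is exactly the base case. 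For $n \geq 2$, the surgery on $m$ has the effect of replacing a trivial arc of the diagram that runs parallel to $m$ by $n-1$ extra kinked strands wrapping around it, exactly as in the full-twist picture of Figure~\ref{fig:braid}~(c); geometrically this is the statement that $T_{r,nr+1}$ is obtained from $T_{r,r+1}$ by inserting $n-1$ additional full twists into a group of parallel strands.

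The key steps, in order, are: (1) locate in $D_0$ the collection of $r-1$ parallel vertical sticks that corresponds to the band on which the twisting happens, together with the ``axis'' strand playing the role of $m$; (2) show that inserting one extra full twist on these $r-1$ strands can be realized on the grid by adding $2(r-1)$ new vertical segments (and the matching horizontal segments), so that after $n-1$ insertions we reach a grid diagram of $T_{r,nr+1}$ with $(2r+1) + (n-1)\cdot 2(r-1) = 2(r-1)n + 3$ vertical segments; (3) verify that the resulting grid diagram can be put in petal form, i.e. that after the standard rearrangement of rows and columns (the same bubble-sorting of horizontal and vertical line segments used in the proof of Theorem~\ref{thm:main}, splitting a vertical segment $v_j$ into $v_j^u, v_j^l$ whenever an obstructing horizontal segment blocks the move) every vertical segment has length $N$ or $N+1$ where $2N+1 = 2(r-1)n+3$, and each vertical segment joins the $i$-th and $(i+N)$-th horizontal segments mod $2N+1$. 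Since a petal grid diagram corresponds to a petal arc presentation and hence to a petal projection, this yields $p(T_{r,nr+1}) \leq 2(r-1)n+3$.

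I expect step (3) — verifying that the twisted grid can actually be rearranged into petal form with the right segment lengths — to be the main obstacle, since one must show that the column/row sorting terminates without forcing any vertical segment outside the allowed length window $\{N, N+1\}$, and must control the height function on the newly inserted twist region so that it meshes with the height function on $D_0$; the splitting trick for obstructed segments has to be shown not to increase the total count. A cleaner alternative, which I would pursue in parallel, is to argue inductively: assume a petal grid diagram of $T_{r,(n-1)r+1}$ with $2(r-1)(n-1)+3$ columns is given, exhibit explicitly the sub-band of $r-1$ parallel columns on which the surgery acts, and show that a single $\tfrac{1}{n-2}\!\to\!\tfrac{1}{n-1}$ modification adds precisely $2(r-1)$ columns while preserving the petal property — reducing the whole lemma to one local move plus the base case $n=1$ furnished by Theorem~\ref{thm:main}.
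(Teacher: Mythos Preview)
Your plan is essentially the paper's own proof: begin with a $(2r+1)$-column grid diagram of $T_{r,r+1}$, perform the $\tfrac{1}{n-1}$-surgery on $m$ to produce $T_{r,nr+1}$ while adding $2(r-1)(n-1)$ vertical segments (the paper phrases this as $r-1$ spirals each costing $2(n-1)$ new verticals), and then rearrange horizontals to reach petal form. One small correction: $m$ is not an ``axis strand'' of the knot but an external meridian curve encircling the $r$ strands with south-east corners in the grid diagram; also, the paper dispatches your step~(3) by direct inspection of the resulting figure rather than by invoking the row/column-sorting and splitting machinery from the proof of Theorem~\ref{thm:main}, so you may be overcomplicating that step.
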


\begin{proof}
We first remark that $T_{r,nr+1}$ is obtained from $T_{r,r+1}$ by using a $\frac{1}{n-1}$-surgery on $m$.
Take a grid diagram of $T_{r,r+1}$ as drawn in Figure~\ref{fig:lem1}~(a).
In this diagram, there are $r$ strands which intersect right endpoints of the horizontal lines and the bottom endpoints of the vertical lines
(arcs with south-east corners in the figure).
Take a closed curve which wraps around these $r$ strands.
Then this curve corresponds to $m$ for $T_{r,r+1}$.
Now, we use a $\frac{1}{n-1}$-surgery on $m$ at the grid diagram of $T_{r,r+1}$ to obtain $T_{r,nr+1}$.
Then we obtain a grid diagram with $r-1$ spirals as drawn in Figure~\ref{fig:lem1}~(b).
Finally we obtain the petal grid diagram by adjusting the height of horizontal line segments as drawn in Figure~\ref{fig:lem1}~(c).
During this process, an additional $2(n-1)$ vertical segments are needed for each of $r-1$ strands.
Thus we have the result as follows;
$$p(T_{r,nr+1}) \leq 2r + 1 + 2(r-1)(n-1) = 2(r-1)n+3.$$
\end{proof}

\begin{figure}[h!]
\centering
\includegraphics{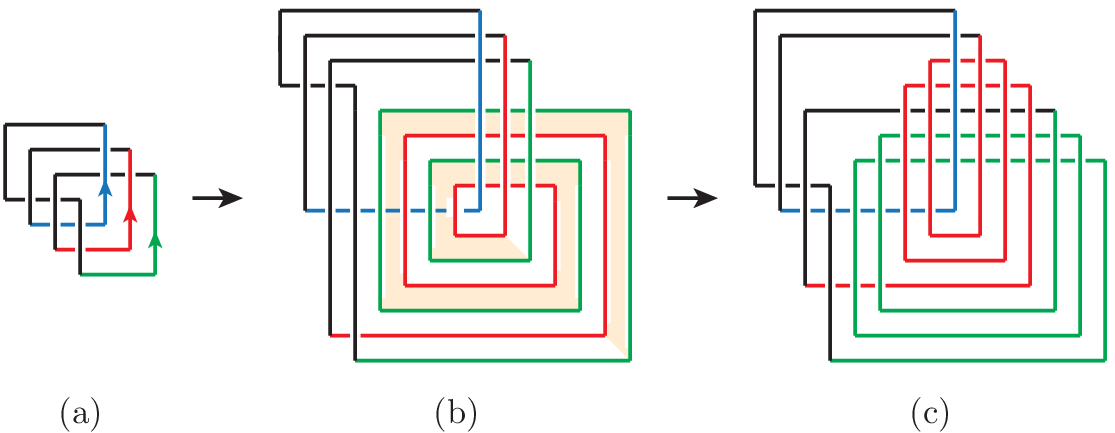}
\caption{The petal grid diagram of $p(T_{r,nr+1})$}
\label{fig:lem1}
\end{figure}

\begin{lemma}
\label{lem:nq-1}
For any positive integers $r$ and $n$,
$$p(T_{r,nr-1}) \leq 2(r-1)n+1.$$
\end{lemma}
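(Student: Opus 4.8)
The plan is to mirror the strategy of Lemma~\ref{lem:nq+1}, but replacing the surgery on $m$ by the surgery on the core curve $l$. Recall from the setup that $T_{nr-1,r}$ is obtained from $T_{r-1,r}$ by a $\frac{1}{n-1}$-surgery on $l$, and that $T_{nr-1,r} = T_{r,nr-1}$ as knot types (swapping the two coprime parameters gives the same knot). So the first step is to take a grid diagram of $T_{r-1,r}$ with $2r-1$ vertical and horizontal line segments that is already in, or easily put into, petal grid position; this is the base case and should need only the standard torus-knot grid diagram of Figure~\ref{fig:grid} specialized to $s=r$, $r\mapsto r-1$. In that diagram one identifies the $r-1$ strands meeting the appropriate corners (the arcs whose endpoints lie at the "north-west/south-east" type corners encircled by the core $l$), and checks that a simple closed curve around exactly those strands represents $l$.

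Next I would perform the $\frac{1}{n-1}$-surgery on $l$ directly at the level of the grid diagram: each of the $r-1$ strands passing through the disk bounded by (a pushed-off copy of) $l$ gets replaced by a spiral that winds $n-1$ extra times, exactly as the $m$-surgery produced spirals in Figure~\ref{fig:lem1}~(b). The key bookkeeping step is to count the new line segments: each of the $r-1$ strands acquires $2(n-1)$ additional vertical segments after the spiral is drawn and then straightened. Starting from $2r-1$ vertical segments, this gives at most
$$(2r-1) + 2(r-1)(n-1) = 2(r-1)n + 1$$
vertical segments, which is the claimed bound. One then has to verify — just as in the proof of Lemma~\ref{lem:nq+1} — that after adjusting the heights of the horizontal line segments, every vertical segment has length $n'$ or $n'+1$ where $2n'+1$ is the number of vertical segments, so that the resulting object is genuinely a petal grid diagram and not merely a grid diagram; by the correspondence between petal grid diagrams and petal projections established in Section~\ref{sec:grid}, this yields $p(T_{r,nr-1}) \le 2(r-1)n+1$.

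The main obstacle I expect is the height-adjustment / "rearrangement" step: one must show that the horizontal (and then vertical) segments of the post-surgery diagram can be permuted, without changing the knot type, into the balanced length pattern required of a petal grid diagram. This is the same technical heart as in the proof of Theorem~\ref{thm:main}, where an explicit reordering of the $h_i$ and $v_i$ was exhibited; here the reordering must be compatible with the $r-1$ spirals created by the surgery. As in that proof, one may occasionally be forced to split a vertical segment $v_j$ into an upper part $v_j^u$ and a lower part $v_j^l$ to make room, and one has to confirm that this splitting does not increase the vertical-segment count beyond the stated bound and does not obstruct reaching petal form. A secondary, more routine point is checking the base case $n=1$ (where the bound reads $p(T_{r,r-1}) \le 2r-1$, consistent with $T_{r,r-1}=T_{r-1,r}$ and the $2s-1$ upper bound) and confirming that the surgery indeed realizes $T_{r,nr-1}$ rather than its mirror, which only affects the answer up to the already-noted symmetry $p(T_{-r,s})=p(T_{r,s})$.
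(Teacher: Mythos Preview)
Your proposal is essentially the paper's own argument: start from the $(2r-1)$-segment grid diagram of $T_{r-1,r}$, perform the $\frac{1}{n-1}$-surgery on $l$ to produce $r-1$ spirals, adjust heights to a petal grid diagram, and count $(2r-1)+2(r-1)(n-1)=2(r-1)n+1$. One small correction: in the paper the curve $l$ encircles the $r+1$ strands with \emph{north-east} corners (not $r-1$ strands with north-west/south-east corners), though after surgery it is still only $r-1$ strands that each acquire $2(n-1)$ extra vertical segments, so your arithmetic is unaffected; also, the height-adjustment here is direct and does not require the segment-splitting manoeuvre from the proof of Theorem~\ref{thm:main}.
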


\begin{proof}
The process of this proof is similar to the proof of Lemma~\ref{lem:nq+1}.
Since $T_{r,nr-1}$ and $T_{nr-1,r}$ have the same knot type,
we regard that $T_{r,nr-1}$ is obtained from $T_{r-1,r}$ by using a $\frac{1}{n-1}$-surgery on $l$.
Take a grid diagram of $T_{r-1,r}$ as drawn in Figure~\ref{fig:lem2}~(a).
In this diagram, there are $r+1$ strands which intersect right endpoints of the horizontal lines and the top endpoints of the vertical lines
(arcs with north-east corners in the figure).
Take a closed curve which wraps around these $r+1$ strands.
Then this curve corresponds to $l$ for $T_{r-1,r}$.
Now, we use a $\frac{1}{n-1}$-surgery on $l$ at the grid diagram of $T_{r-1,r}$ to obtain $T_{r,nr-1}$.
Then we obtain a grid diagram with $r-1$ spirals as drawn in Figure~\ref{fig:lem2}~(b).
Finally we obtain the petal grid diagram by adjusting the height of horizontal line segments as drawn in Figure~\ref{fig:lem2}~(c).
During this process, an additional $2(n-1)$ vertical segments are needed for each of $r-1$ strands.
Thus we have the result as follows;
$$p(T_{r,nr-1}) \leq 2r - 1 + 2(r-1)(n-1) = 2(r-1)n+1.$$
\end{proof}

\begin{figure}[h!]
\centering
\includegraphics{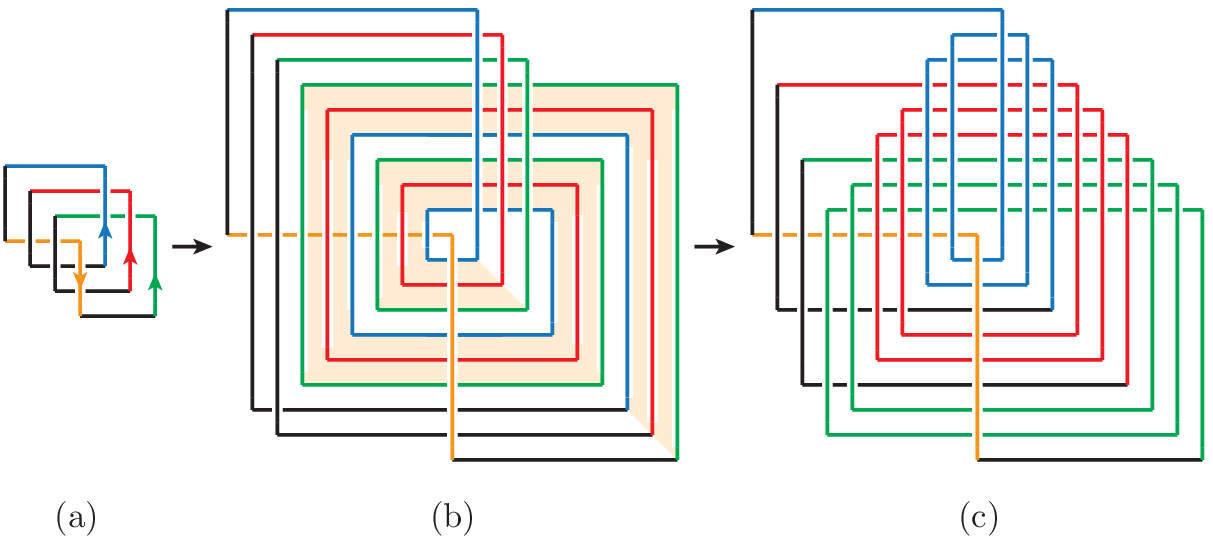}
\caption{The petal grid diagram of $p(T_{r,nr-1})$}
\label{fig:lem2}
\end{figure}

Now we recall Theorem~\ref{thm:main2}.

\begin{thm3}
Let $r$ and $s$ be positive integers with $s \equiv \pm 1 \mod r$.
Then
$$
p(T_{r,s})\leq
2s- 2\Big\lfloor \frac{s}{r} \Big\rfloor +1.
$$
\end{thm3}

\begin{proof}
First assume that $s \equiv 1 \mod r$.
Then $s=\lfloor \frac{s}{r} \rfloor r + 1$.
By Lemma~\ref{lem:nq+1}, 
\begin{align*}
    p(T_{r,s}) \ = \ p(T_{r,\lfloor \frac{s}{r} \rfloor r + 1}) \ & \leq \ 2(r-1) \Big\lfloor \frac{s}{r} \Big\rfloor +3 \\
    & = \ 2 \left( r \Big\lfloor \frac{s}{r} \Big\rfloor +1 \right) - 2 \Big\lfloor \frac{s}{r} \Big\rfloor + 1\\
    & = \ 2s - 2 \Big\lfloor \frac{s}{r} \Big\rfloor +1.\\
\end{align*}
It remains to consider the case when $s \equiv -1 \mod r$.
Then $s=(\lfloor \frac{s}{r} \rfloor+1) r - 1$.
By Lemma~\ref{lem:nq-1}, 
\begin{align*}
    p(T_{r,s}) \ = \ p(T_{r,(\lfloor \frac{s}{r} \rfloor+1) r - 1}) \ & \leq \ 2 \left( r-1 \right) \left( \Big\lfloor \frac{s}{r} \Big\rfloor +1 \right) +1 \\
    & = \ 2 r \left( \Big\lfloor \frac{s}{r} \Big\rfloor +1 \right) - 2 - 2 \Big\lfloor \frac{s}{r} \Big\rfloor + 1\\
    & = \ 2s - 2 \Big\lfloor \frac{s}{r} \Big\rfloor +1.\\
\end{align*}
\end{proof}

\section{Concluding remark}
\label{sec:conc}

In this section, we discuss the relation between $\alpha(K)$ and $p(K)$.
Let $r$ and $s$ be relatively prime integers with $1 < r < s$.
Etnyre and Honda~\cite{EH} showed that an arc index of a torus knot $T_{r,s}$ is equal to $r+s$.

First consider the case that $r \equiv 1 \mod (s-r)$.
Then for a torus knot $T_{r,s}$, $\alpha(T_{r,s}) = r+s$ and $p(T_{r,s})=2s-1$ by Theorem~\ref{thm:main}.
Therefore, there is a difference of as much as $s-r-1$ between the petal number and the arc index for a torus knot $T_{r,s}$.
This means that there is no gap between the arc index and the petal number for a torus knot $T_{r,s}$ when $r$ is equal to $s-1$.
Even though there is a gap of 1 between the arc index and the petal number for a torus knot $T_{r,s}$ when $r$ is equal to $s-2$, it is meaningless.
Because the gap comes from the fact that a petal number is odd for every knot.

Now consider the case that $r=2$.
Then for a torus knot $T_{2,s}$,
$$p(T_{2,s})\leq 2s- 2\Big\lfloor \frac{s}{2} \Big\rfloor +1 = 2s-(s-1)+1 = s+2$$ 
by Theorem~\ref{thm:main2}.
Since an arc index of $T_{2,s}$ is equal to $2+s$, the petal number of $T_{2,s}$ is also equal to $2+s$.
We remark that $T_{2,s}$ is a 2-bridge knot.
Adams et al~\cite{ACDL} showed that a petal number of a 2-bridge knot $K$ with an odd number of crossings is equal to $c(K)+2$.
From this result, we also have the same result of petal number of $T_{2,s}$ as obtained from Theorem~\ref{thm:main2}, since a torus knot $T_{2,s}$ is 2-bridge knot with $s$ crossings.
Furthermore if $r$ and $s$ satisfy the condition of Theorem~\ref{thm:main}, then $\lfloor \frac{s}{r} \rfloor =1$. 
This means that
$$p(T_{r,s}) = 2s-1=2s-2\Big\lfloor \frac{s}{r} \Big\rfloor+1.$$
So we guess that Theorem~\ref{thm:main2} holds for every torus knot $T_{r,s}$.
Thus we suggest the following conjecture.

\begin{conjecture}
Let $r$ and $s$ be relatively prime integers with $1 < r < s$.
Then
$$
p(T_{r,s})\leq
2s- 2\Big\lfloor \frac{s}{r} \Big\rfloor +1.
$$
\end{conjecture}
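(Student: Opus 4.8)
The plan is to reduce the conjecture to two pieces: a \emph{base case} covering all coprime pairs with $r<s<2r$, and a single \emph{surgery step} that promotes such a base case to an arbitrary $s$. Write $s=qr+t$ with $q=\lfloor s/r\rfloor\ge 1$ and $1\le t\le r-1$; the endpoint values $t=1$ and $t=r-1$ are already handled by Lemmas~\ref{lem:nq+1} and~\ref{lem:nq-1}, so the new content lies in $2\le t\le r-2$. When $q=1$ the pair $(r,s)$ itself satisfies $r<s<2r$, and when $q\ge 2$ one descends to $(r,r+t)$, which also satisfies $r<r+t<2r$. Thus it suffices to (i) prove $p(T_{r,s})\le 2s-1$ for every coprime pair with $r<s<2r$, producing the petal grid diagram in a form adapted to surgery, and then (ii) run the surgery.

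For (i) I would generalize the proof of Theorem~\ref{thm:main}. That argument starts from the grid diagram of Figure~\ref{fig:grid}, equips $D_V\cup D_H$ with the height function $f$, and then re-orders first the horizontal and then the vertical segments into a petal grid diagram with $2s-1$ segments. The congruence $r\equiv 1\bmod(s-r)$ is used there only to force the single spiral built from the pairs $v_{s\pm\ell m},h_{s\pm\ell m}$ (with $m=s-r$) to close up with remainder exactly $1$, so that it sweeps out all $2s-1$ columns and the rearrangement tables terminate. When the congruence is dropped, $s$ is merely coprime to $m$ and the spiral leaves a block of leftover columns on each side; the natural remedy is to iterate the spiral construction along the Euclidean expansion of $s/r$, i.e.\ to re-order the segments in successive nested passes, checking at the end of each pass that every vertical segment still has length $n$ or $n+1$ and joins the $i$-th to the $(i+n)$-th horizontal segment modulo $2n+1$. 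Combined with the lower bound $2s-1\le p(T_{r,s})$ furnished by Theorem~\ref{thm:super} and Kuiper's identity $sb(T_{r,s})=\min\{2r,s\}$, this gives $p(T_{r,s})=2s-1$ on the whole range $r<s<2r$. I would also arrange the resulting petal grid diagram of $T_{r,r+t}$ in \emph{surgery-ready} form, exhibiting a curve that encircles the $r$ strands meeting the south-east corners, i.e.\ a copy of the framing curve $m$ of Figures~\ref{fig:ml}--\ref{fig:lem1}.

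Given such a diagram, step (ii) is the mechanism of Lemmas~\ref{lem:nq+1} and~\ref{lem:nq-1}: a $\frac{1}{q-1}$-surgery on $m$ carries $T_{r,r+t}$ to $T_{r,(r+t)+(q-1)r}=T_{r,s}$, turns the petal grid diagram into a grid diagram with $r-1$ spirals, and after a height re-adjustment becomes a petal grid diagram with
\[
\bigl(2(r+t)-1\bigr)+2(r-1)(q-1)=2(qr+t)-2q+1=2s-2\Bigl\lfloor\tfrac{s}{r}\Bigr\rfloor+1
\]
vertical segments, which is exactly the asserted bound.

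The main obstacle is step (i): with no divisibility hypothesis available, one must show that the iterated-spiral re-ordering of the grid diagram terminates in a genuine petal grid diagram using \emph{exactly} $2s-1$ columns rather than more, and that it can be made to display the curve $m$ around $r$ strands. A secondary difficulty is the surgery bookkeeping in step (ii) — confirming, as in the two lemmas, that the re-adjustment after a $\frac{1}{q-1}$-surgery costs precisely $2(r-1)(q-1)$ extra segments. I expect the cleanest organization to be an induction in which the inductive statement carries along the surgery-ready petal grid diagram itself, not merely the numerical bound $p(T_{r,s})\le 2s-2\lfloor s/r\rfloor+1$.
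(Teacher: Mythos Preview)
The statement you are addressing is a \emph{conjecture} in the paper; the authors do not prove it and offer no argument beyond the remark that Theorem~\ref{thm:main} and the $r=2$ case are consistent with it. So there is no ``paper's own proof'' to compare against, and the relevant question is simply whether your proposal constitutes a proof. It does not: it is a reasonable strategic outline, but both of its two steps are left as assertions.

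The serious gap is step~(i). Establishing $p(T_{r,s})\le 2s-1$ for \emph{every} coprime pair with $r<s<2r$ would already be a new theorem strictly stronger than Theorem~\ref{thm:main}, whose proof uses the hypothesis $r\equiv 1\bmod(s-r)$ in an essential way: that congruence is precisely what makes the single spiral exhaust all $2s-1$ columns and makes the explicit rearrangement tables close up. Your proposed fix---``iterate the spiral construction along the Euclidean expansion of $s/r$''---is a plausible heuristic, but you have not exhibited the actual re-ordering, nor verified that after each pass every vertical segment has length $n$ or $n+1$ and connects levels $i$ and $i+n$ modulo $2n+1$. Without that verification there is no petal grid diagram and hence no upper bound. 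You acknowledge this yourself (``the main obstacle is step~(i)''), so the proposal is really a research plan rather than a proof.

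Step~(ii) is also not free. Lemmas~\ref{lem:nq+1} and~\ref{lem:nq-1} perform surgery on the \emph{specific} petal grid diagrams of $T_{r,r+1}$ and $T_{r-1,r}$, where the $r$ (respectively $r+1$) strands encircled by $m$ (respectively $l$) sit in a single contiguous corner block; the count of $2(r-1)(q-1)$ added segments depends on that structure. For a general ``surgery-ready'' petal grid diagram of $T_{r,r+t}$ produced by your hypothetical step~(i), you would need to show that the $r$ strands meeting the south-east corners can be isolated in the same way and that the post-surgery height re-adjustment still costs exactly $2(r-1)(q-1)$ segments and still yields a petal grid diagram. Your arithmetic is correct, but the diagrammatic claim behind it is unproved.
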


\bibliography{petaltorus.bib} 

\begin{thebibliography}{10}

\bibitem{A}
{\sc C.~Adams}, {\em Triple crossing number of knots and links}, Journal of
  Knot Theory and its Ramifications, 22 (2012).

\bibitem{ASO}
{\sc C.~Adams, O.~Capovilla-Searle, J.~Freeman, D.~Irvine, S.~Petti, D.~Vitek,
  A.~Weber, and S.~Zhang}, {\em Bounds on {\"u}bercrossing and petal numbers
  for knots}, Journal of Knot Theory and Its Ramifications, 24 (2015),
  p.~1550012.

\bibitem{ACDL}
{\sc C.~Adams, T.~Crawford, B.~DeMeo, M.~Landry, A.~Lin, M.~Montee, S.~Park,
  S.~Venkatesh, and F.~Yhee}, {\em Knot projections with a single
  multi-crossing}, Journal of Knot Theory and Its Ramifications, 24 (2012).

\bibitem{BM}
{\sc J.~Birman and W.~Menasco}, {\em Special positions for essential tori in
  link complements}, Topology, 33 (1994), pp.~525--556.

\bibitem{CGHS}
{\sc L.~Colton, C.~Glover, M.~Hughes, and S.~Sandberg}, {\em A reidemeister
  type theorem for petal diagrams of knots}, Topology and its Applications, 267
  (2019), p.~106896.

\bibitem{C}
{\sc P.~Cromwell}, {\em Embedding knots and links in an open book i: basic
  properties}, Topology and its Applications, 64 (1995), pp.~37--58.

\bibitem{EH}
{\sc J.~Etnyre and K.~Honda}, {\em Knots and contact geometry i: Torus knots
  and the figure eight knot}, Journal of Symplectic Geometry, 1 (2001).

\bibitem{EZHLN}
{\sc C.~Even-Zohar, J.~Hass, N.~Linial, and T.~Nowik}, {\em The distribution of
  knots in the petaluma model}, Algebraic \& Geometric Topology, 18 (2017).

\bibitem{K}
{\sc N.~H. Kuiper}, {\em A new knot invariant.}, Mathematische Annalen, 278
  (1987), pp.~193--210.

\bibitem{LJ}
{\sc H.~J. Lee and G.~T. Jin}, {\em Petal number of torus knots of type $(r, r+
  2) $}, arXiv preprint arXiv:2112.13211,  (2021).

\end{thebibliography}
\bibliographystyle{siam}

\end{document}